\newtheorem{thm}{Theorem}
\newtheorem{cor}{Corollary}
\newtheorem{lem}{Lemma}
\newtheorem{conj}{Conjecture}
\newtheorem{quest}{Question}
\newcommand{\dist}{\operatorname{dist}}
\newcommand{\Int}{\operatorname{Int}}
\newcommand{\Out}{\operatorname{Out}}
\title{Partitioning a Planar Graph into two Triangle-Forests}
\author{Kolja Knauer, Clément Rambaud, and Torsten Ueckerdt}
\date{\today}
\begin{document}

\maketitle

\begin{abstract}
    We show that the vertices of every planar graph can be partitioned into two sets, each inducing a so-called triangle-forest, i.e., a graph with no cycles of length more than three. We further discuss extensions to locally planar graphs. 
    After finishing the paper we noticed that our main result was already proved much earlier by Carsten Thomassen [Decomposing a Planar Graph into Degenerate Graphs, JCTB 1995].
\end{abstract}

\section{Introduction}

Tait's conjecture from 1884~\cite{Tai84} is (equivalent to) the claim that the vertices of any planar graph~$G$ can be partitioned into two sets each inducing a forest, also see~\cite{CK69,RW08}. However, the conjecture is false, due to first examples of Tutte~\cite{Tut46}. Also see~\cite{HM88} for further counterexamples.
However, every planar graph can be vertex-partitioned into two outerplanar graphs, by assigning the layers of a BFS-tree alternatingly to the two parts. 
See~\cite{AEFG16} for further considerations into this direction.
In the present paper, we consider a family of graphs strictly between forests and outerplanar graphs:
We call a graph~$G$ a \emph{triangle-forest} if~$G$ has no cycles of length at least four.
In other words, every maximal~$2$-connected subgraph of~$G$ is a triangle, see \cref{fig:triangle-forest}.
We show that the vertices of any planar graph can be partitioned into two sets each inducing a triangle-forest (cf.~\Cref{thm:bipartition}). However, this was already obtained previously in~\cite[Theorem 4.1]{Tho95}. We then show that this does not extend to locally planar graphs (cf.~\Cref{cor:not2}), while it follows from a result of Kawarabayashi and Mohar~\cite{Kawarabayashi2010} that all such graphs can be partitioned into four (triangle-)forests~(cf.~\Cref{cor:4}). 

\begin{figure}
    \centering
     \includegraphics[width=.4\textwidth]{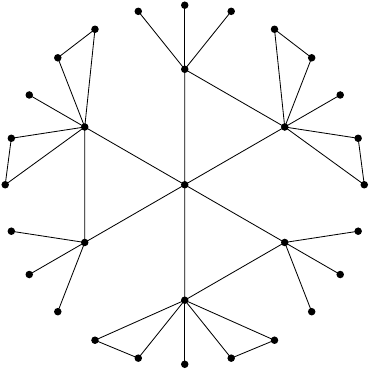}
    \caption{A connected triangle forest.}
    \label{fig:triangle-forest}
\end{figure}




\section{Vertex-Partitioning a Planar Graph into two Triangle-Forests}

We will need the following definitions.
A \emph{Tutte path}, respectively \emph{Tutte cycle}, of a graph~$G$ is a path, respectively cycle,~$T$ such that for any connected component~$K$ of~$G\setminus V(T)$ there are at most~$3$ edges from~$K$ to~$T$. 
Note that originally the definition of Tutte paths has a further stronger property, which we omit here, since we will not need it.
We will use two results on these objects:

\begin{lem}[Tutte~1956~\cite{Tut56}]\label{Tutte}{\ \\}
    Let~$C$ be the outer face of a~$2$-connected planar graph~$G$ and~$u,v,e\in C$ be two vertices and an edge of~$C$.
    Then there exists a Tutte path from~$u$ to~$v$ through~$e$ in~$G$.
\end{lem}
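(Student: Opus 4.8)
The plan is to prove the statement by induction on $|V(G)|$, carrying through a hypothesis stronger than the one asked for. The conclusion as phrased does not induct on its own: when $G$ is cut along a separator, an arc of the outer cycle $C$ becomes an interior edge of one of the pieces, so a component that interacted with $C$ harmlessly in $G$ may end up sending too many edges to the reassembled path. The standard remedy---and this is exactly the additional property that the remark just before the lemma sets aside---is to also control, in the induction hypothesis, how a bridge meeting $C$ attaches to the path. (Here a \emph{bridge} of a path $P$ in $G$ is an edge of $G$ off $P$ with both ends on $P$, or a connected component of $G\setminus V(P)$ together with all edges joining it to $P$, and its \emph{attachments} are its vertices on $P$.) Concretely, I would prove: $G$ has a $u$--$v$ path $P$ with $e\in E(P)$ such that no component of $G\setminus V(P)$ sends more than three edges to $P$---this is the stated conclusion---and, moreover, every bridge of $P$ that contains an edge of $C$ has at most two attachments. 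I would also assume $e\ne uv$: otherwise $P$ is forced to be that single edge, and the lemma is in any case only applied in this generic situation.

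For the induction, write $C=P_1\cup P_2$, where $P_1,P_2$ are the two $u$--$v$ arcs of $C$ and $e\in E(P_1)$. If $G=C$, take $P=P_1$: its only bridge is the arc $P_2$, with attachments $u$ and $v$ and at most two edges to $P$, which is fine even though it consists of $C$-edges. If $G\ne C$, then either $C$ has a chord or $G$ has a vertex in the interior of $C$. In the first case a chord $f=xy$ splits $G$ into subgraphs $G_1,G_2$ sharing exactly $x$, $y$ and $f$, each $2$-connected and plane with outer cycle an $x$--$y$ arc of $C$ together with $f$; the same kind of splitting applies to any $2$-separation of $G$, adding a virtual edge between the two separating vertices to each side. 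One then reads off from the positions of $u,v,e$ and the separating vertices which of a handful of configurations occurs, decides accordingly how to route $P$ through the two pieces (possibly after splitting a piece further along the same lines), applies the induction hypothesis to each resulting smaller graph with suitably chosen terminals and designated edge---in a piece not containing $e$ one designates $f$, or a genuine $C$-edge of that piece---and concatenates. If $G$ has neither a chord of $C$ nor such a $2$-separation, a short separate argument using $2$-connectivity and planarity reduces to a smaller instance, for instance by suppressing or deleting a suitable low-degree vertex near the designated edge.

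The step I expect to be the main obstacle, and where essentially the whole proof lives, is verifying the two conditions after reassembly. Bridges contained entirely in one piece carry the required bounds straight from the induction hypothesis (a component meeting $C$ in $G$ also meets the outer cycle of the piece containing it). The delicate object is the at most one bridge straddling a separator $\{x,y\}$. If the reassembled $P$ runs through the virtual edge $xy$, the far piece collapses to a bridge with the two attachments $x,y$, which is fine for the attachment count even though it contains $C$-edges---but one must have checked that it does not send more than three edges to $\{x,y\}$, and when it would, $P$ has to be routed through the far piece instead. If $P$ bypasses $xy$, one must have arranged the recursive calls so that $P$ still meets both $x$ and $y$, since otherwise the far piece fuses with a near-piece bridge across the missing endpoint and picks up a third attachment and a fourth edge to $P$. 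Making these choices correctly---in each configuration, deciding whether to force a recursive path through $xy$, and weighing this against the edge counts---together with a batch of routine degenerate cases ($e$ incident to $u$ or $v$, a separating vertex equal to $u$ or $v$, a piece that is a single edge, and the structureless base case) is what makes the classical proof of this lemma lengthy.
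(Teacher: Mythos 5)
The paper does not actually prove this statement: it is quoted from Tutte's 1956 paper (with the classical stronger bridge condition deliberately omitted), so there is no in-text proof to compare yours against. Your outline does follow the classical route --- strengthen the statement so that every bridge of the path has at most three attachments and every bridge containing an edge of the outer cycle has at most two, then induct by splitting along chords of~$C$ and $2$-separations with virtual edges --- and your diagnosis of why the unstrengthened statement does not induct is exactly the point behind the ``further stronger property'' the paper alludes to. So the skeleton is right.

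As a proof, however, the proposal has a genuine gap, and you say so yourself: the entire substance of Tutte's argument is the case analysis you defer (``one reads off which of a handful of configurations occurs, decides accordingly how to route~$P$'', ``where essentially the whole proof lives''). Choosing terminals and designated edges in each piece, deciding when to route through the virtual edge, and verifying both bridge conditions after reassembly is precisely what fills the many pages of Tutte's, Thomassen's and Sanders's proofs; naming these difficulties without resolving them is a plan, not a proof. Two further points need care. First, the lemma as stated here bounds the number of \emph{edges} from a component of~$G\setminus V(T)$ to~$T$, whereas the classical theorem (and the attachment half of your hypothesis) bounds attachment \emph{vertices}; these differ for non-cubic graphs, since several vertices of a bridge may attach at the same path vertex, so your induction must consistently carry the edge bound --- or you must observe that the attachment version suffices because the lemma is only applied to cubic duals. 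Second, your dismissal of the case $e=uv$ is not innocuous: there $P$ is forced to equal the single edge~$uv$, and under the edge-counting definition the statement then genuinely fails (e.g.\ in a wheel with $u,v$ adjacent rim vertices), so this degenerate case must either be excluded in the statement or shown not to arise where the lemma is invoked.
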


\begin{lem}[Three-Edge-Lemma~\cite{San96,TY94}]\label{3EL}{\ \\}
    Let~$C$ be the outer face of a~$2$-connected planar graph~$G$ and~$e,f,g\in C$ be three edges of~$C$.
    Then there exists a Tutte cycle using edges~$e,f,g$ in~$G$.
\end{lem}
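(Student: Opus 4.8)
The plan is to prove the statement by induction on $|V(G)|$, along the lines of---and invoking---the Tutte-path result \cref{Tutte}. In the base case $G=C$ the cycle $C$ itself is the desired Tutte cycle, since $G\setminus V(C)$ is empty and $E(C)\supseteq\{e,f,g\}$. For the inductive step I would exploit the chord structure of $G$ and its $2$-separations relative to the outer cycle $C$. If $C$ has a chord $xy$, then $xy$ splits $G$ into two $2$-connected plane graphs $G_1,G_2$ glued along $\{x,y\}$, whose outer cycles $C_1,C_2$ are each an $xy$-arc of $C$ together with the edge $xy$; the edges of $C$ other than the chord are partitioned among $E(C_1)$ and $E(C_2)$. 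If $C$ has no chord but $G\neq C$, then either there is an interior bridge attached to $C$ at exactly two vertices $x,y$ (yielding a $2$-separation to split at), or $G$ is internally $3$-connected, in which case I would instead cut along the boundary cycle of an inner face meeting $C$, exactly as in the proof of \cref{Tutte}.

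The core of the induction is a case analysis on how $e,f,g$ distribute across the two sides $G_1,G_2$ of the chosen separation $\{x,y\}$. In the balanced situation---say $e,f$ lie on $C_1$ and $g$ on $C_2$---I would apply the inductive hypothesis to $G_1$ with the three edges $e$, $f$, and the separating edge $xy$, obtaining a Tutte cycle $T_1$ of $G_1$ through $e,f$ that in addition uses the separation; likewise I would apply \cref{Tutte} (or again the inductive hypothesis) inside $G_2$ to obtain a Tutte path from $x$ to $y$ through $g$. Splicing $T_1$ and this path along $x,y$ produces a cycle $T$ of $G$ through $e,f,g$, and since $\{x,y\}\subseteq V(T)$, every connected component of $G\setminus V(T)$ lies entirely inside $G_1$ or entirely inside $G_2$; hence it inherits its bound of at most three edges to $T$ from the Tutte property established on the corresponding side. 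The one genuinely delicate situation is when all three of $e,f,g$ land on the same side, say $C_1$.

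The main obstacle is exactly this last case: we must still route $T$ through $G_2$---otherwise a large component of $G_2\setminus\{x,y\}$ could send arbitrarily many edges to $V(T)\cap\{x,y\}$ and violate the Tutte condition in $G$---yet the prescription budget of three edges has already been spent on $e,f,g$. I would resolve this either by choosing the chord (or bridge) carefully, namely innermost with respect to one of $e,f,g$, so that the side carrying the three edges is small enough, or meets the separation in a sufficiently controlled way, to be handled directly; or, more robustly, by strengthening the statement carried through the induction to the standard strong form of Tutte's theorem---additionally bounding by two the number of attachments of bridges that meet the outer cycle, and permitting an endpoint of a prescribed edge to coincide with a separation vertex. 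With such a strengthening the splicing steps go through uniformly, and the remaining bookkeeping---matching endpoints, degenerate separations, and the internally $3$-connected case---is routine though lengthy.
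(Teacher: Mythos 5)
You are attempting to prove a statement that the paper does not prove at all: \cref{3EL} is imported as a black box from Sanders and from Thomas--Yu, and these are substantial theorems whose published proofs run to many pages with delicate inductions. Measured against that, your sketch has a genuine gap, and it is exactly the one you flag yourself: the case in which all three prescribed edges $e,f,g$ end up on the same side $G_1$ of the $2$-separation $\{x,y\}$. There the argument does not close in either of the two directions available. If the cycle $T$ is kept inside $G_1$ (using the chord $xy$ as an edge), then a component of $G_2-\{x,y\}$ may send arbitrarily many edges to $x$ and to $y$ --- in a general planar graph there is no degree bound to save you --- so the Tutte condition fails in $G$. If instead $T$ must cross into $G_2$, you need the cycle in $G_1$ to pass through $e,f,g$ \emph{and} through the chord $xy$, i.e.\ a four-edge prescription on the outer face of $G_1$; but four prescribed outer edges cannot in general be forced onto a Tutte cycle (three is known to be best possible), so the inductive hypothesis you are carrying is simply not strong enough to be reapplied. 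Choosing the chord ``innermost'' does not obviously repair this, and you give no argument that it does.

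Your fallback --- strengthening the induction to the ``standard strong form'' of Tutte's theorem, with the two-attachment condition for bridges meeting the outer cycle --- is precisely where the real work lives, and it is not routine. \cref{Tutte} lets you prescribe two endpoints and one edge; upgrading this to a cycle through three prescribed outer edges is the content of the Sanders and Thomas--Yu theorems, and their proofs require a new and considerably more intricate induction hypothesis, not bookkeeping on top of Tutte's original statement. So the proposal identifies the right difficulty but does not resolve it; as written it is a plan for a proof rather than a proof. For the purposes of this paper, the correct move is the one the authors make: cite \cite{San96,TY94} and do not attempt to reprove the lemma.
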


An \emph{edge-cut} of a connected graph~$G$ is a set of edges~$F$, such that~$G\setminus F$ is disconnected.
A \emph{cyclic} edge-cut~$F$ furthermore has the property that each component of~$G\setminus F$ contains a cycle.
The \emph{cyclic} edge-connectivity of~$G$ is the smallest size of a cyclic edge-cut. 
It is easy to see that a~$3$-connected cubic graph is cyclically~$4$-edge-connected if every edge-cut~$F$ of order~$3$ isolates a single vertex.

The following lemma explains why we are interested in the previous lemmas.

\begin{lem}\label{dualtuttecycle}
    If~$T$ is a Tutte cycle in a cyclically~$4$-edge-connected cubic planar graph~$G$, then~$2$-coloring the vertices of~$G^*$ depending on whether they correspond to faces in the interior or exterior of~$T$, yields a partition into two triangle-forests.
\end{lem}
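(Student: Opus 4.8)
The plan is to argue by contradiction, pitting the defining property of the Tutte cycle~$T$ against the cyclic $4$-edge-connectivity of~$G$. Suppose one of the two color classes of~$G^*$ induces a subgraph containing a cycle of length at least four. Since the interior and exterior of~$T$ play symmetric roles (view $G$ as embedded on the sphere), it suffices to treat the \emph{interior} class; write $H$ for the subgraph of $G^*$ induced by the faces of $G$ lying inside~$T$, and assume $H$ contains a cycle $D$ with $|E(D)|\ge 4$.

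The key preliminary — and what I regard as the heart of the argument — is the structural claim that \emph{every connected component of $G\setminus V(T)$ is a single vertex}. To prove it, let $K$ be such a component. Every edge of $G$ leaving $K$ goes to $V(T)$, so $E(K,V(T))$ is an edge-cut of~$G$, and it has size at most~$3$ because $T$ is a Tutte cycle. If $|V(K)|\ge 2$, then a short degree count (every vertex of $G$ has degree~$3$, and at most~$3$ edges leave~$K$, so $2\,|E(G[K])| = 3\,|V(K)| - |E(K,V(T))| \ge 3\,|V(K)|-3$) shows that $G[K]$ has more edges than a forest on its vertex set and hence contains a cycle, while the other side of the cut contains the cycle~$T$. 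Then $E(K,V(T))$ is a cyclic edge-cut of size at most~$3$, contradicting that $G$ is cyclically $4$-edge-connected. Hence $|V(K)|=1$.

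Next I invoke planar duality. By the standard correspondence between cycles of the dual and bonds of the primal, $F:=\{\,e\in E(G): e^*\in E(D)\,\}$ (where $e^*$ is the dual edge of~$e$) is a bond of~$G$ with $|F|=|E(D)|\ge 4$, so $G\setminus F$ has exactly two components $A$ and $B$, namely the vertex sets of $G$ lying inside and outside the closed curve~$D$. Because every edge of $D$ is dual to an edge of~$G$ whose two incident faces both lie inside~$T$, drawing each such dual edge through the midpoint of its primal edge and the interiors of those two faces places the whole curve~$D$ inside the open region bounded by~$T$. Consequently every vertex of~$T$ lies strictly outside~$D$, i.e.\ $V(T)\subseteq B$ and $A\cap V(T)=\varnothing$.

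Finally I combine the ingredients. As one side of a bond, $A$ is nonempty and $G[A]$ is connected; being disjoint from $V(T)$ it is contained in a single component of $G\setminus V(T)$, which by the structural claim is a single vertex. Hence $|A|=1$, so $F$ consists of exactly the three edges at that vertex and $|F|=3$, contradicting $|F|\ge 4$. Thus $H$ has no cycle of length at least four, i.e.\ $H$ is a triangle-forest, and by the interior/exterior symmetry so is the other class. The only point requiring a bit of care beyond this outline is the claim that $D$ can be routed inside~$T$ (equivalently $V(T)\subseteq B$): I would verify it via the explicit drawing of the dual edges above, noting that the interior of an interior face, and the midpoint of a non-$T$ edge both of whose faces are interior, all lie in the open region bounded by~$T$.
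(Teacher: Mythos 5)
Your proof is correct and follows essentially the same route as the paper's: you first show, via the Tutte-cycle property together with cubicity and cyclic $4$-edge-connectivity, that every component of $G\setminus V(T)$ is a single vertex, and then deduce via planar duality that a cycle of length at least~$4$ inside one color class would produce a forbidden small cut. The only difference is one of detail --- you make explicit the degree count and the cycle--bond duality (including routing $D$ inside~$T$), steps the paper's proof leaves implicit in the phrase that the interior faces ``do not induce any cycles except faces.''
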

\begin{proof}
    Without loss of generality consider a component~$K$ of~$G^*\setminus V(T)$ in the interior of~$T$.
    Since~$T$ is a Tutte cycle there are at most~$3$ edges from~$K$ to~$T$. 
    Hence, these edges form an edge-cut of size at most~$3$.
    By cyclic connectivity of~$G$, we get that~$K$ is a single vertex.

    Thus, all components of~$G^*\setminus V(T)$ are single vertices, which means that~$G$'s vertices corresponding to faces inside~$T$ do not induce any cycles except faces.
    This means that there are no cycles on more than~$3$ vertices, i.e., they induce a triangle-forest.
\end{proof}

The proof of the following is inspired by a proof in~\cite{LM17} and has been shown in a slightly weaker form in~\cite{Wu10}.

\begin{lem}\label{4connected}
    Let~$G$ be a~$4$-connected planar triangulation with outer triangle~$\Delta$. 
    Then any~$2$-coloring of the vertices of~$\Delta$ extends to a~$2$-coloring of~$G$ such that each color induces a triangle-forest and no edge of~$\Delta$ is in a monochromatic triangle except if~$\Delta$ is monochromatic itself.
\end{lem}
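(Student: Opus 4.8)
The plan is to pass to the planar dual and reduce everything to a single application of the Three-Edge-Lemma (\Cref{3EL}), turning the resulting Tutte cycle back into a vertex partition of $G$ via \Cref{dualtuttecycle}. Set $H:=G^*$. Since $G$ is a $4$-connected triangulation, $H$ is a simple $3$-connected cubic planar graph, and every $3$-edge-cut of $H$ corresponds to a triangle of $G$, hence to a facial triangle (a $4$-connected triangulation has no separating triangle), hence isolates a vertex of $H$; so $H$ is cyclically $4$-edge-connected and \Cref{dualtuttecycle} applies to any Tutte cycle of $H$. Write $\Delta=abc$: its outer face is a degree-$3$ vertex $v_\Delta$ of $H$ incident to the edges $(ab)^*,(bc)^*,(ca)^*$, and the three faces of $H$ around $v_\Delta$ are precisely the vertices $a,b,c$ of $G$. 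The dictionary I will use is: for a Tutte cycle $T$ of $H$ and adjacent vertices $u,v$ of $G$, the partition of \Cref{dualtuttecycle} gives $u,v$ different colors iff $(uv)^*\in T$ (an edge of $H$ has its two faces on opposite sides of $T$ precisely when it lies on $T$). Since $T$ meets the degree-$3$ vertex $v_\Delta$ in $0$ or $2$ of its edges, it follows that $a,b,c$ become monochromatic iff $v_\Delta\notin V(T)$, and otherwise the minority vertex is the one of $a,b,c$ at which the two $\Delta$-edges leaving it both have their duals on $T$. As both conditions in the statement are symmetric under swapping the two colors, it therefore suffices to find in $H$: (i) a Tutte cycle avoiding $v_\Delta$ (for a prescribed monochromatic $\Delta$); and (ii) for a prescribed coloring in which exactly one vertex of $\Delta$ differs — say $c$, the other two cases being symmetric — a Tutte cycle through $v_\Delta$ using $(bc)^*$ and $(ca)^*$ and additionally through the vertex $y:=(abw)^*$ using $(aw)^*$ and $(bw)^*$, where $abw$ is the inner face of $G$ at the edge $ab$ (note $w\notin\{a,b,c\}$ since the outer face $abc$ and the inner face are distinct).

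For (i), delete $v_\Delta$ from $H$. The result is a $2$-connected planar graph in which the three former neighbours $p_1,p_2,p_3$ of $v_\Delta$ have degree $2$ and lie on the boundary of the single face $F$ obtained by merging the three faces around $v_\Delta$; I take $F$ as the outer face. ($4$-connectedness of $G$ ensures $p_1,p_2,p_3$ are pairwise non-adjacent, so the three edges below are distinct.) Applying \Cref{3EL} with three edges of $\partial F$, one incident to each $p_i$, produces a Tutte cycle $T$ that passes through every degree-$2$ vertex $p_i$ and avoids $v_\Delta$. Back in $H$ this $T$ is still a Tutte cycle: $\{v_\Delta\}$ is then a component of $H\setminus V(T)$ with exactly $3$ edges to $T$, while every other component together with its at most $3$ edges to $T$ is unchanged from $H-v_\Delta$. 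By \Cref{dualtuttecycle} we get a partition of $G$ into two triangle-forests in which $a,b,c$ all share a color; recolouring matches the prescribed one, and the condition on the edges of $\Delta$ is vacuous.

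For (ii), delete the edge $\alpha:=(ab)^*$ from $H$. This merges the faces $F_a,F_b$ of $H$ into a face $F_{ab}$ and leaves $v_\Delta$ and $y$ of degree $2$, their remaining edges being $(bc)^*,(ca)^*$ and $(aw)^*,(bw)^*$ respectively; moreover $(bc)^*$ and $(aw)^*$ both lie on $\partial F_{ab}$. Taking $F_{ab}$ as the outer face and applying \Cref{3EL} with three edges including $(bc)^*$ and $(aw)^*$ gives a Tutte cycle of $H-\alpha$ which, being forced through the degree-$2$ vertices $v_\Delta$ and $y$, uses all of $(bc)^*,(ca)^*,(aw)^*,(bw)^*$. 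Reinstating $\alpha$ (whose endpoints $v_\Delta,y$ now lie on the cycle) leaves it a Tutte cycle $T$ of $H$ with $(ab)^*\notin T$. By \Cref{dualtuttecycle}, $a$ and $b$ receive one color, $c$ the other, and $w$ too receives $c$'s color; recolouring matches the prescribed coloring. Finally, since every triangle of $G$ is facial, the only triangles on $ab$ are $abc$ and $abw$, each of which contains the minority color (at $c$, resp.\ $w$), so $ab$ lies in no monochromatic triangle; $ac$ and $bc$ trivially lie in none, being bichromatic edges.

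The hard part, I expect, is item (ii): one must arrange the reduction $H\to H-\alpha$ so that a single use of \Cref{3EL} simultaneously steers the cycle through $v_\Delta$ in the way that makes $c$ the minority and through $y$ in the way that puts $w$ on $c$'s side, the deletion of $\alpha$ being exactly what keeps $(ab)^*$ off the cycle — so precisely two of the three permitted edges are spent. The remainder is routine bookkeeping: checking that $H-v_\Delta$ and $H-\alpha$ are $2$-connected and have the stated outer faces with enough boundary edges; that the chosen edges are distinct and lie on the chosen outer face (where $4$-connectedness of $G$ rules out the coincidences that would otherwise arise, such as $p_i\sim p_j$ or parallel edges); and that lifting a Tutte cycle over a vertex deletion or an edge reinsertion preserves the Tutte property, which in each case comes down to the deleted/reinstated vertices lying on — or being isolated by — the lifted cycle, so that the components of $H\setminus V(T)$ and their edges to $T$ are unaffected.
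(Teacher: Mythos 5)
Your proof is correct, and while it shares the paper's overall framework --- pass to the cubic dual, check cyclic $4$-edge-connectivity, construct a Tutte cycle of $G^*$ with prescribed behaviour at the vertex $\Delta^*$, and convert it to a vertex partition via \cref{dualtuttecycle} --- your heterochromatic case takes a genuinely different route. Your case (i) is essentially the paper's monochromatic case: delete $\Delta^*$ and apply \cref{3EL} to three outer-face edges at its three neighbours, then lift the cycle back. For the heterochromatic case the paper instead invokes the Tutte path lemma (\cref{Tutte}) in $G^*\setminus\Delta^*$, taking a Tutte path from $B^*$ to $C^*$ through (an edge at) $A^*$ and closing it through $\Delta^*$ into a Tutte cycle; you avoid Tutte paths altogether by deleting the single dual edge $(ab)^*$ and applying \cref{3EL} a second time, using the two degree-$2$ vertices this creates ($\Delta^*$ and the dual of the inner face $abw$) to force the cycle through $(ac)^*,(bc)^*,(aw)^*,(bw)^*$ while keeping $(ab)^*$ off it, and then reinserting the edge, whose endpoints lie on the cycle, so the Tutte property survives. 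Your version buys uniformity --- only the Three-Edge-Lemma is needed, so \cref{Tutte} could be dropped from the preliminaries --- at the cost of the extra bookkeeping you identify (distinctness of the chosen boundary edges, $2$-connectivity of $G^*$ minus a vertex or edge, and the lifting arguments), all of which you handle correctly: the non-adjacency of $A^*,B^*,C^*$ follows from minimum degree $4$ in $G$, and your dictionary (adjacent $u,v$ get different colors iff $(uv)^*\in T$) together with the fact that all triangles of a $4$-connected triangulation are facial yields exactly the paper's conclusion that neither face at the monochromatic edge $ab$ is monochromatic.
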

\begin{proof}
    Let~$\Delta=(a,b,c)$ and denote by~$A,B,C$ the interior face of~$G$ that contains the edge~$bc,ac,ab$, respectively.
    Consider the dual graph~$G^*$, which is a~$3$-connected cyclically~$4$-edge-connected, cubic, planar graph.
    Denote by~$\Delta^*$ the vertex of~$G^*$ corresponding to~$\Delta$, by~$A^*,B^*,C^*$ its three neighbors corresponding to the faces~$A,B,C$ of~$G$, and by~$a^*,b^*,c^*$ its three incident faces corresponding to the vertices~$a,b,c$ of~$G$.
    Now, let~$H$ be the graph~$G^*\setminus \Delta^*$, which is~$2$-connected since~$G^*$ is~$3$-connected.
    Moreover, all~$A^*,B^*,C^*$ lie on the outer face~$C$.
    See~\Cref{tuttecycles} for an illustration of~$G^*$ and the following constructions.

    \begin{figure}[htp]
        \centering
        \includegraphics[width=\textwidth]{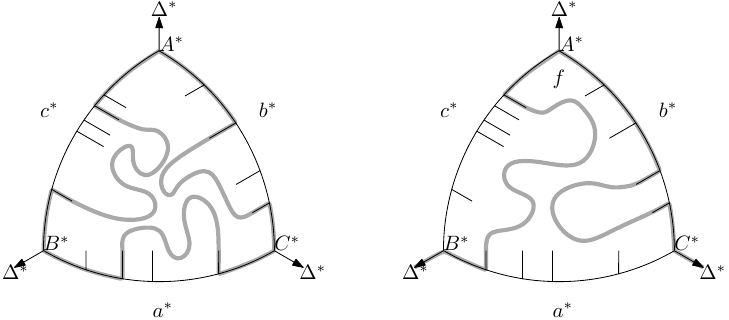}
        \caption{The monochromatic and heterochromatic case of construction of Tutte cycles in the proof of~\Cref{4connected}.}\label{tuttecycles}
    \end{figure}

    In order to construct the desired~$2$-coloring of~$G$, we distinguish the two different possible~$2$-colorings of~$\Delta$. 
    If~$\Delta$ is monochromatic, then we choose three edges on the outer face~$C$ of~$H$ that contain vertices~$A^*,B^*,C^*$.
    By \cref{3EL}, we get a Tutte cycle~$T$ of~$H$ containing~$A^*,B^*,C^*$. Since~$\Delta^*$ has all its neighbors in~$T$, also in~$G^*$ we have that~$T$ is a Tutte cycle.
    Furthermore, observe that~$T$ separates~$a^*$ from~$b^*$ and~$c^*$.
    By \Cref{dualtuttecycle} we obtain that~$G$ has a~$2$-coloring such that each color induces a triangle-forest, coloring~$a,b,c$ with the same color.

    If~$\Delta$ is heterochromatic, then we can assume without loss of generality that~$a$ is colored differently from~$b$ and~$c$.
    By \cref{Tutte}, we can take a Tutte path~$T$ from~$B^*$ through~$A^*$ to~$C^*$. 
    Now add to~$T$ the path~$B^*,\Delta^*,C^*$ obtaining a cycle~$T'$.
    Observe that~$T'$ separates~$a^*$ from~$b^*$ and~$c^*$.
    Further, since~$T'$ contains~$A^*$, the face~$f$ incident to~$b^*$ and~$c^*$ is also separated from~$b^*$ and~$c^*$.
    Since~$T$ was a Tutte path in~$H$ and the only new vertex~$\Delta^*$ is on~$T'$ and has all its neighbors in~$T$, we have that~$T'$ is a Tutte cycle of~$G^*$.
    Together with~\Cref{dualtuttecycle}, we obtain that~$G$ has a~$2$-coloring such that each color induces a triangle-forest, coloring~$a$ different from~$b,c$ without a monochromatic triangle containing the edge~$bc$.
 \end{proof}

\begin{thm}\label{thm:bipartition}
    The vertices of any planar graph~$G$ can be~$2$-colored such that each color class induces a triangle-forest.
    Moreover, there is such a coloring for any prescribed precoloring of any fixed triangle~$\Delta$.
\end{thm}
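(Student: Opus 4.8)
My plan is to prove, by induction on $|V(G)|$, the following strengthening of \Cref{4connected} to arbitrary triangulations, of which \Cref{thm:bipartition} will be an easy consequence: for every planar triangulation $G$ with a facial triangle $\Delta$ and every $2$-coloring of $V(\Delta)$, there is an extension to a $2$-coloring of $V(G)$ in which each color induces a triangle-forest and no edge of $\Delta$ lies in a monochromatic triangle unless $\Delta$ is monochromatic. \Cref{4connected} is exactly this statement in the case that $G$ is $4$-connected (i.e.\ has no separating triangle and at least $5$ vertices), so the induction only has to reduce a general triangulation to that case.

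For $|V(G)|\in\{3,4\}$ the graph is $K_3$ (equal to $\Delta$) or $K_4$, and the claim is checked directly; for $K_4$ one colors the fourth vertex with the minority color of $\Delta$, or arbitrarily if $\Delta$ is monochromatic. For $|V(G)|\ge 5$ there are two cases. If $G$ has no separating triangle, then $G$ is $4$-connected and \Cref{4connected} applies verbatim. Otherwise fix a separating triangle $S$; embedding $G$ with $\Delta$ on the outer face, $S$ bounds a closed disk $D$ not containing the outer face. Let $G_2$ be the sub-triangulation drawn inside $D$ (so $S$ is its outer triangle) and let $G_1$ be $G$ with the interior of $D$ deleted (a triangulation with $\Delta$ on the outer face and $S$ bounding an inner face); since $S$ is non-facial and $S\ne\Delta$, both $G_1$ and $G_2$ have strictly fewer vertices than $G$. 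Apply the induction hypothesis to $G_1$ with the prescribed coloring of $\Delta$, obtaining a coloring $\chi_1$, which in particular colors $V(S)$; then apply the induction hypothesis to $G_2$ with $S$ precolored by $\chi_1|_S$, obtaining $\chi_2$. As $\chi_1$ and $\chi_2$ agree on $S$, they combine to a coloring $\chi$ of $G$.

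The step I expect to be the main obstacle is checking that $\chi$ is valid. Suppose $\gamma$ is a monochromatic cycle of $G$ of length at least $4$; by induction $\gamma$ is contained in neither $G_1$ nor $G_2$, so it uses vertices strictly on both sides of $S$ and therefore meets $V(S)$ in at least two vertices, all of $\gamma$'s color, say red. Decompose $\gamma$ into the arcs between consecutive vertices of $\gamma$ on $S$; each nonempty arc lies strictly inside $D$ or strictly outside it, and at least one arc $Q$ is nonempty because $|\gamma|\ge 4>3$. Say $Q$ lies inside $D$, with endpoints $s,s'\in V(S)$. Then $Q$ together with the edge $ss'\in E(S)$ is a red cycle of $G_2$ whose length is at least $4$ — contradicting that $\chi_2$ induces triangle-forests — unless $Q$ is a single vertex $w$. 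In that case $sws'$ is a red triangle of $G_2$ containing the edge $ss'$ of $S$, so by the triangle property of $\chi_2$ the triangle $S$ is monochromatic red, whence its third vertex $s''$ is red and $s\,w\,s'\,s''$ is a red $4$-cycle in $G_2$ (using $w\notin V(S)$), again contradicting $\chi_2$. An entirely analogous, shorter case analysis — now using the ``no monochromatic triangle on $E(\Delta)$ unless $\Delta$ is monochromatic'' property of $\chi_1$ when an arc collapses near $\Delta$ — shows that $\chi$ has no monochromatic triangle on an edge of $\Delta$ unless $\Delta$ is monochromatic, which closes the induction.

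Finally I deduce \Cref{thm:bipartition}. Given a planar graph $G$ with a triangle $\Delta\subseteq G$ (the case that $G$ is triangle-free, or has at most two vertices, being trivial), fix a plane embedding and split $G$ along $\Delta$ into the two subgraphs induced by $V(\Delta)$ together with the vertices drawn on each side of $\Delta$; triangulate each half so that $\Delta$ becomes a facial triangle in it. Apply the strengthened statement above to both halves with the given precoloring of $\Delta$, and glue the two colorings by exactly the crossing-cycle argument of the previous paragraph, now with $\Delta$ itself playing the role of the size-$3$ separator: this produces a $2$-coloring of the union of the two triangulated halves in which each color induces a triangle-forest, and restricting it to the subgraph $G$ gives the desired coloring extending the prescribed precoloring of $\Delta$.
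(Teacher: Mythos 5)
Your proposal follows essentially the same route as the paper: triangulate, induct over separating triangles, invoke \Cref{4connected} in the $4$-connected case, and pass the coloring of the separating triangle as a precoloring to the inner piece. You are in fact more explicit than the paper: you strengthen the induction hypothesis to carry the ``no monochromatic triangle on an edge of the distinguished facial triangle unless it is monochromatic'' condition of \Cref{4connected} through \emph{all} triangulations, and you verify the gluing across the separating triangle. The paper inducts on the bare statement of \Cref{thm:bipartition} and leaves this verification implicit; without the extra condition a monochromatic $4$-cycle alternating across the cut triangle (one vertex inside, one outside, two on the triangle) is not excluded, so your strengthened hypothesis is exactly the right way to make the induction self-contained.

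Two small repairs are needed. First, in the $K_4$ base case you cannot color the fourth vertex arbitrarily when $\Delta$ is monochromatic: giving it the same color produces a monochromatic $K_4$ and hence a monochromatic $4$-cycle; it must receive the opposite color. Second, in the gluing step the phrase ``say $Q$ lies inside $D$'' must not be read as a symmetry reduction: the triangle condition is available only for $\chi_2$ relative to $S$ (for $\chi_1$ it concerns $\Delta$, not $S$), so an outside arc consisting of a single vertex cannot be refuted the same way. Instead, choose $Q$ to be an arc strictly inside $D$; such an arc exists because $\gamma\not\subseteq G_2$ and $\gamma\not\subseteq G_1$ force a vertex of $\gamma$ strictly inside $D$ --- a fact you already noted --- and then your argument goes through verbatim. (Also, the triangle-free case in your final paragraph is not ``trivial'' by itself; handle it as the paper does, by triangulating $G$ and applying your strengthened lemma to an arbitrarily precolored facial triangle of the triangulation.)
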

\begin{proof}
    Add edges or vertices to~$G$ in order to turn it into a triangulation. 
    Removing these elements from the end result, still gives a vertex-partition into two triangle-forest.

    Let~$\Delta$ be the fixed triangle.
    We proceed by induction on the number of vertices. 
    If~$G$ is~$4$-connected, then~$\Delta$ is a face and the result follows immediately from~\Cref{4connected}.
    Otherwise, if~$\Delta$ is separating, let us pick one separating triangle~$\Delta'$.
    If~$\Delta'=\Delta$ then apply induction to the interior and exterior of~$\Delta$ with respect to the prescribed coloring on~$\Delta$.
    If~$\Delta$ is (without loss of generality) on the exterior of~$\Delta'$, then remove the interior of~$\Delta'$ and apply induction resulting in some coloring on~$\Delta'$.
    Now apply induction with respect to this precoloring on~$\Delta'$ to the interior of~$\Delta'$.
\end{proof}

\subsection{Tightness and possible strengthenings}
\Cref{thm:bipartition} implies that every planar graph~$G$ on~$n$ vertices contains an induced triangle-forest on at least~$n/2$ vertices.
On the other hand, there are planar graphs where every induced triangle-forest contains at most half the vertices.
Observe for example that any induced triangle-forest in the octahedron graph contains at most~$4$ of its~$8$ vertices.
Thus, any vertex-disjoint union of octahedra (also with any set of additional edges, e.g., to obtain a triangulation) has no induced triangle-forest on more than half of its vertices.

\Cref{thm:bipartition} cannot be strengthened to vertex-partitioning every planar graph into one forest and one triangle-forest.
To see this, take~$G$ to be the dual graph of a cyclically~$4$-edge-connected~$3$-connected planar cubic non-Hamiltonian graph.
(Such graphs exist from~$42$ vertices on, see~\cite{ABHM00}.)
Thus,~$G$ is a~$4$-connected planar triangulation that cannot be vertex-partitioned into two forests.
Now, stack a triangle~$T$ into each face~$F$ of~$G$, such that~$T\cup F$ induces an octahedron.
Suppose that the obtained graph~$G'$ has a vertex-partition into one forest and one triangle-forest.
Then, some triangular face~$F$ of~$G$ must be in the triangle-forest.
But then the triangle~$T$ of~$G'$ stacked into~$F$ must be entirely part of the forest --- contradiction. 

\begin{quest}
Can every planar graph be vertex-partitioned into one forest and one chordal graph, or into one forest and one outerplanar graph?
\end{quest}


\section{Graphs on surfaces}

We will discuss possible extensions to surfaces of higher genus, see~\cite{MC01} for undefined notions. Indeed, \Cref{thm:bipartition} does not extend to graphs embeddable in other surfaces. 
It does not hold on the torus, since the~$K_7$ embeds on this surfaces but cannot be vertex-partitioned into two triangle-forests.
Also for the projective plane there are graphs that cannot be vertex-partitioned into two triangle-forests, as for example the graph in~\Cref{projective}.

\begin{figure}[htp]
    \centering
    \includegraphics[width=.5\textwidth]{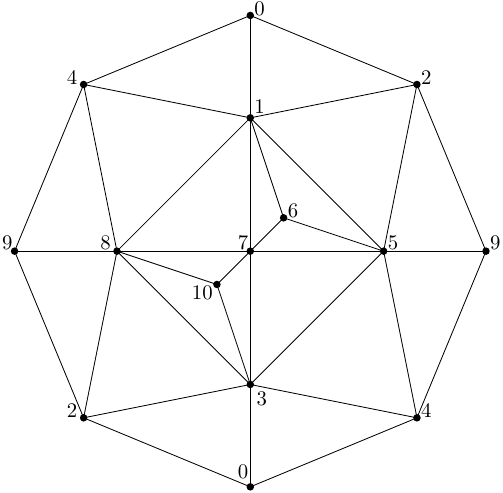}
    \caption{A projective planar graph (with g6-code \texttt{J|tyIlxJGb?}) that cannot be vertex-partitioned into two triangle-forests.}\label{projective}
\end{figure}

Next, we show that~\Cref{thm:bipartition} cannot even be extended to locally planar graphs. To do so, we will construct graphs embeddable on a surface $\Sigma$ such that in every $2$-coloring of their vertices, there is a long monochromatic cycle.
This will easily follows from the following lemma, which is inspired by an answer on mathoverflow~\cite{mathoverflow}.

\begin{lem}\label{lem:hex_lemma_surface}
    Let $\Sigma$ be a surface non-isomorphic to the sphere, and let $G$ be a graph cellularly embedded in $\Sigma$.
    Let $\phi$ be a $2$-coloring of $G$ such that every face $f$ of $G$ which is not a triangle is monochromatic.
    Then there exists a monochromatic non-contractible cycle $C$ in $G$.
\end{lem}

\begin{proof}
    Suppose for contradiction that $\Sigma,G$ and $\phi\colon V(G) \to \{1,2\}$ are a counter-example with $|V(G)|$ minimum.
    If $\phi^{-1}(i) = \emptyset$ for some $i \in \{1,2\}$, then $\phi$ is constant $3-i$.
    Since $G$ is cellularly embedded in $\Sigma$ and $\Sigma$ is not the sphere, $G$ contains a non-contractible cycle $C$, which is then monochromatic.
    Now suppose that $\phi^{-1}(1), \phi^{-1}(2) \neq \emptyset$.
    
    Let $K$ be a connected component of $G[\phi^{-1}(1)]$.
    Let $f$ be a face of $K$ whose interior contains at least one vertex.
    Such a face exists since $\phi^{-1}(2) \neq \emptyset$.
    Let $\Int(f)$ be the (possibly empty) embedded graph induced by the vertices of $G$ lying in the interior of $f$.
    We denote by $\Out(f)$ the face of $\Int(f)$ containing $V(K)$.
    
    We claim that every face of $\Int(f)$ is either a triangle or monochromatic for $\phi\vert_{V(\Int(f))}$.
    Indeed, if $f'$ is a face of $\Int(f)$, then either $f'$ is a face of $G$ and so is either a triangle or monochromatic,
    or $f' = \Out(f)$.
    In the latter case, if $f'$ is neither a triangle nor monochromatic, then there are two consecutive vertices $u,v$ along $f'$ in $\Int(f)$
    with $\phi(u)=1$ and $\phi(v)=2$.
    By construction, these two vertices belongs to a face $f''$ of $G$ that contains a vertex in $V(K)$.
    Since $f''$ is a face of $G$ which is not monochromatic, $f'$ is a triangle.
    In particular, there is an edge colored $1$ between $V(K)$ and $V(\Int(f))$, contradicting the fact that $K$ is a connected component
    of $G[\phi^{-1}(1)]$.
    This proves that every face of $\Int(f)$ is either a triangle or monochromatic.

    With the same argument, one can show that every face of $G-V(\Int(f))$ is either a triangle or monochromatic.

    If $\Int(f)$ is cellularly embedded in $\Sigma$, then by minimality of $|V(K)|$,
    it contains a monochromatic non-contractible cycle $C$ and we are done.
    Otherwise, $G-V(\Int(f))$ is cellularly embedded in $\Sigma$, and so by minimality of $|V(G)|$, $G - V(\Int(f))$ contains a non-contractible monochromatic cycle $C$.
    This proves the lemma.
\end{proof}

\begin{cor}\label{cor:not2}
    Let $\Sigma$ be a surface non-isomorphic to the sphere. For every positive integer $\ell$, there is a graph $G$ embeddable in $\Sigma$
    such that for every $2$-coloring of $V(G)$, there is a monochromatic cycle of length at least $\ell$ in $G$.
    In particular, for $\ell \geq 4$, $G$ does not admit a partition of $V(G)$ into two induced triangle-forests.
\end{cor}

\begin{proof}
    Let $\ell$ be  positive integer.
    Let $G$ be a triangulation of $\Sigma$ such that every non-contractible cycle of $G$ has length at least $\ell$.
    Then, by Lemma~\ref{lem:hex_lemma_surface}, for every $2$-coloring of $G$, $G$ contains a monochromatic non-contractible cycle $C$, which must have length at least $\ell$.
\end{proof}

On the other hand, we show that any graph embedded in a fixed surface $\Sigma$ with no small non-contractible cycle can be partitioned into four induced forest.
This is a consequence of the following theorem.
We say that a graph $G$ is \emph{acyclically $k$-colorable} for a positive integer $k$,
if $G$ admits a proper $k$-coloring of its vertices, such that for every pair $i,j$ of colors, the union of color class of $i$ and color class of $j$
induces a forest in $G$.

\begin{thm}[Kawarabayashi and Mohar~\cite{Kawarabayashi2010}]{\ \\}
    Let $\Sigma$ be a surface. There is an integer $\ell$ such that for every graph $G$ embedded in $\Sigma$,
    if $G$ has no non-contractible cycle of length at most $\ell$,
    then $G$ is acyclically $7$-colorable.
\end{thm}

\begin{cor}\label{cor:4}
    Let $\Sigma$ be a surface. There is an integer $\ell$ such that for every graph $G$ embedded in $\Sigma$,
    if $G$ has no non-contractible cycle of length at most $\ell$,
    then $G$ can be partitioned into four induced forests.
\end{cor}

Note that a positive answer to the following would be a weakening of~\cite[Conjecture 1.3]{Kawarabayashi2010}:

\begin{quest}
Can every graph embedded in $\Sigma$ with no small non-contractible cycle be partitioned into three (triangle-)forests?
\end{quest}

\subsubsection*{Acknowledgements.}
We thank Meike Hatzel, Bobby Miraftab, and Sandra Kiefer for fruitful discussions and the organizers and participants of the Tenth and the Eleventh Annual Workshop on Geometry and Graphs, 2023 and 2024 at Bellairs Research Institute for a great atmosphere.

KK is supported by the Spanish Research Agency through grants RYC-2017-22701, PID2019-104844GB-I00, PID2022-137283NB-C22 and the Severo Ochoa and María de Maeztu Program for Centers and Units of Excellence in R\&D (CEX2020-001084-M) and by the French Research Agency through ANR project DAGDigDec: ANR-21-CE48-0012.

TU is funded by the Deutsche Forschungsgemeinschaft (DFG, German Research Foundation) - 520723789.

\bibliography{lit}

\begin{thebibliography}{10}

\bibitem{mathoverflow}
mathoverflow post {``Study of Hex on the Torus''}, 2017.
\newblock last access June 2024.
\newblock URL:
  \url{https://mathoverflow.net/questions/282088/study-of-hex-on-the-torus}.

\bibitem{ABHM00}
R.~E.~L. Aldred, S.~Bau, D.~A. Holton, and Brendan~D. McKay.
\newblock Nonhamiltonian 3-connected cubic planar graphs.
\newblock {\em SIAM J. Discrete Math.}, 13(1):25--32, 2000.
\newblock \href {https://doi.org/10.1137/S0895480198348665}
  {\path{doi:10.1137/S0895480198348665}}.

\bibitem{AEFG16}
Patrizio Angelini, William Evans, Fabrizio Frati, and Joachim Gudmundsson.
\newblock {SEFE} without mapping via large induced outerplane graphs in plane
  graphs.
\newblock {\em J. Graph Theory}, 82(1):45--64, 2016.
\newblock \href {https://doi.org/10.1002/jgt.21884}
  {\path{doi:10.1002/jgt.21884}}.

\bibitem{CK69}
Gary Chartrand and Hudson~V. Kronk.
\newblock The point-arboricity of planar graphs.
\newblock {\em J. Lond. Math. Soc.}, 44:612--616, 1969.
\newblock \href {https://doi.org/10.1112/jlms/s1-44.1.612}
  {\path{doi:10.1112/jlms/s1-44.1.612}}.

\bibitem{HM88}
D.~A. Holton and B.~D. McKay.
\newblock The smallest non-{Hamiltonian} 3-connected cubic planar graphs have
  38 vertices.
\newblock {\em J. Comb. Theory, Ser. B}, 45(3):305--319, 1988.
\newblock \href {https://doi.org/10.1016/0095-8956(88)90075-5}
  {\path{doi:10.1016/0095-8956(88)90075-5}}.

\bibitem{Kawarabayashi2010}
Ken-ichi Kawarabayashi and Bojan Mohar.
\newblock Star coloring and acyclic coloring of locally planar graphs.
\newblock {\em SIAM J. Discrete Math.}, 24(1):56–71, January 2010.
\newblock \href {https://doi.org/10.1137/060674211}
  {\path{doi:10.1137/060674211}}.

\bibitem{LM17}
Zhentao Li and Bojan Mohar.
\newblock Planar digraphs of digirth four are 2-colorable.
\newblock {\em SIAM J. Discrete Math.}, 31(3):2201--2205, 2017.
\newblock \href {https://doi.org/10.1137/16M108080X}
  {\path{doi:10.1137/16M108080X}}.

\bibitem{MC01}
Bojan Mohar and Carsten Thomassen.
\newblock {\em Graphs on surfaces}.
\newblock Baltimore, MD: Johns Hopkins University Press, 2001.

\bibitem{RW08}
Andr{\'e} Raspaud and Weifan Wang.
\newblock On the vertex-arboricity of planar graphs.
\newblock {\em Eur. J. Comb.}, 29(4):1064--1075, 2008.
\newblock \href {https://doi.org/10.1016/j.ejc.2007.11.022}
  {\path{doi:10.1016/j.ejc.2007.11.022}}.

\bibitem{San96}
Daniel~P. Sanders.
\newblock On {Hamilton} cycles in certain planar graphs.
\newblock {\em J. Graph Theory}, 21(1):43--50, 1996.
\newblock \href
  {https://doi.org/10.1002/(SICI)1097-0118(199601)21:1<43::AID-JGT6>3.0.CO;2-M}
  {\path{doi:10.1002/(SICI)1097-0118(199601)21:1<43::AID-JGT6>3.0.CO;2-M}}.

\bibitem{Tai84}
Peter~Guthrie Tait.
\newblock Listing`s topology.
\newblock {\em Phil. Mag. (5)}, 17:30--46, 1884.

\bibitem{TY94}
Robin Thomas and Xingxing Yu.
\newblock 4-connected projective planar graphs are {Hamiltonian}.
\newblock {\em J. Comb. Theory, Ser. B}, 62(1):114--132, 1994.
\newblock \href {https://doi.org/10.1006/jctb.1994.1058}
  {\path{doi:10.1006/jctb.1994.1058}}.

\bibitem{Tho95}
Carsten Thomassen.
\newblock Decomposing a planar graph into an independent set and a 3-degenerate
  graph.
\newblock {\em J. Comb. Theory, Ser. B}, 83(2):262--271, 2001.
\newblock \href {https://doi.org/10.1006/jctb.2001.2056}
  {\path{doi:10.1006/jctb.2001.2056}}.

\bibitem{Tut46}
William~T. Tutte.
\newblock On {Hamiltonian} circuits.
\newblock {\em J. Lond. Math. Soc.}, 21:98--101, 1946.
\newblock \href {https://doi.org/10.1112/jlms/s1-21.2.98}
  {\path{doi:10.1112/jlms/s1-21.2.98}}.

\bibitem{Tut56}
William~T. Tutte.
\newblock A theorem on planar graphs.
\newblock {\em Trans. Am. Math. Soc.}, 82:99--116, 1956.
\newblock \href {https://doi.org/10.2307/1992980} {\path{doi:10.2307/1992980}}.

\bibitem{Wu10}
Franklin Wu.
\newblock {\em Induced Forests in Planar Graphs}.
\newblock UCSD (Master's Thesis), 2010.
\newblock URL:
  \url{https://www.math.ucsd.edu/sites/math.ucsd.edu/files/undergrad/honors-program/honors-theses/2009-2010/Franklin_Wu_Honors_Thesis.pdf}.

\end{thebibliography}
\bibliographystyle{plainurl}

\end{document}